\theoremstyle{theorem}
\theoremstyle{theorem}
\newtheorem{thm}{Theorem}[section]
\newtheorem{lem}[thm]{Lemma}
\theoremstyle{definition}
\newtheorem{thmwot}[thm]{}
\numberwithin{equation}{thm}
\newcommand{\Img}{\operatorname{Im}}
\newcommand{\End}{\operatorname{End}}
\newcommand{\Hom}{\operatorname{Hom}}
\newcommand{\Ind}{\operatorname{Ind}}
\newcommand{\Inf}{\operatorname{Inf}}
\newcommand{\Ker}{\operatorname{Ker}}
\newcommand{\Res}{\operatorname{Res}}
\newcommand{\set}[1]{\{\,#1\,\}}
\newcommand{\lsup}[1]{{^{#1}\hspace{-0.8pt}}}
\newcommand{\lsub}[1]{{_{#1}\hspace{-0.8pt}}}
\title{Non-injective inductions and restrictions of modules over finite groups
\footnote{supported by Sichuan Science and Technology Program (No. 2024NSFJQ0070) and by National Natural Science Foundation of China (No. 12271446).}}
\author{Conghui Li
\footnote{School of Mathematics, Southwest Jiaotong University, Chengdu 611756, China, Email: liconghui@swjtu.edu.cn},\quad
Yuting Tian
\footnote{School of Mathematics, Southwest Jiaotong University, Chengdu 611756, China, Email: 
tyt122814@my.swjtu.edu.cn}
\footnote{corresponding author}}
\date{}
\begin{document}

\setlength\abovedisplayskip{1ex plus 0.2ex minus 0.1ex}
\setlength\belowdisplayskip{1ex plus 0.2ex minus 0.1ex}

\raggedbottom
\maketitle

%%%%%%%%%%%%%%%%%%%%%%%%%%%%%%%%%%%%%%%%%%%%%%%%%%%%%%%%%
\begin{abstract}
In this note, we extend the inductions and restrictions of modules over finite groups to non-injective group homomorphisms, establishing transitivity, Frobenius reciprocity, Mackey's formula, etc.  

\textbf{2020 Mathematics Subject Classification:} 20C20.

\textbf{Keyword:} non-injective homomorphism, induction and restriction , Frobenius reciprocity, Mackey's formula
\end{abstract}

%%%%%%%%%%%%%%%%%%%%%%%%%%%%%%%%%%%%%%%%%%%%%%%%%%%%%%%%%
\section{Introduction}
Throughout this note, all groups are assumed to be finite and $R$ would always be a commutative ring with identity.

Induction and restriction are among the most basic and important constructions in group representation theory.
It is mainly used to study the relationship between the modules of groups and their subgroups.
For the definition and basic properties of induction and restrictions, see for example \cite[\S2.1,\S2.2,\S2.4]{Linck18a} or and \cite[\S4.3]{Web1}.

For any unitary $R$-algebra homomorphism $\alpha\colon A \to B$, we can consider the induction and restriction via $\alpha$.
Let $G$ be a group and $H \leq G$, then the induction from $H$ to $G$ and restriction from $G$ to $H$ can be viewed as special cases for the injection of $R$ algebras $RH \hookrightarrow RG$ induced by the injection of subgroup $H \hookrightarrow G$.

L. Puig introduced inductions of interior $G$-algebras in \cite{Puig81} and considered inductions of interior $G$-algebras under non-injective group homomorphisms in \cite[\S3]{Puig99}.
Puig established under some conditions for non-injective inductions of interior $G$-algebras the classical results, such as  transitivity, Frobenius reciprocity, Mackey's formula, etc.; see \cite{Linck1} for another method for the transitivity.

When $V$ is an $RG$-module, the endomorphism algebra $A=\End_R(V)$ is an interior $G$-algebra over $R$.
When $V$ is $R$-free, the (non-injective) inductions of modules over finite groups corresponding to (non-injective) inductions of endomorphism algebras; see \cite[Proposition 3.7]{Puig99}.
From this, some properties of (non-injective) inductions and restrictions of modules can be derived from those of (non-injective) inductions and restrictions of endomorphism algebras under the assumption that $V$ is $R$-free and some additional assumptions; see \cite[Remark 3.18]{Puig99}.

In this note, we give direct treatments of non-injective inductions and restrictions of modules over finite groups, such as transitivity, Frobenius reciprocity, Mackey formula, etc.
Our assumptions for those properties in \cite[Remark 3.18]{Puig99} are slightly weaker than those in \cite{Puig99}, and Puig did not indicate how the Mackey formula for non-injective inductions and restrictions of modules can be derived from the Mackey formula of non-injective inductions of interior $G$-algebras in \cite[Proposition 3.21]{Puig99}.
Considering possible applications, it may be not unnecessary to include detailed proof of these material.

After giving some notation, preliminaries and quoted results in \S\ref{not-lem}, we prove the generalization of the nature of non-injective induction and restriction in \S\ref{proof}.

%%
%\paragraph{Acknowledgement}

%%%%%%%%%%%%%%%%%%%%%%%%%%%%%%%%%%%%%%%%%%%%%%%%%%%%%%%%%
\section{Definitions and some lemmas}\label{not-lem}

\begin{thmwot}\label{notation}
We first give basic definitions and notation.

Let $\varphi\colon G \to G_1$ be a group homomorphism, $V$ be an $RG$-module and $V_1$ be an $RG_1$-module.
Then $RG_1$ can be viewed as a (left or right) $RG$-module via $\varphi$.
Define the \textbf{non-injective induction} of $V$ via $\varphi$ by $\Ind_\varphi V = RG_1 \otimes_{RG}V$ as an $RG_1$-module and the \textbf{non-injective restriction} of $V_1$ via $\varphi$ is just $V_1$ viewed as an $RG$-module with $g\in G$ acting on it via $\varphi(g)$ and is denoted by $\Res_\varphi V_1$ or just $\lsub{\varphi} V_1$.
It is easy to see that $\Res_\varphi V_1 \cong RG_1\otimes_{RG_1}V_1 \cong \Hom_{RG_1}(RG_1,V_1)$ where $RG_1$ in the second term is viewed as an $(RG,RG_1)$-bimodule and $RG_1$ in the third term is viewed as an $(RG_1,RG)$-bimodule.

The above notion of non-injective induction and restriction is of course a special case of more general settings.
Let $A,B$ be $R$-algebras and $\alpha\colon A \to B$ is a unitary $R$-algebra homomorphism, $U$ be an $A$-module and $V$ be a $B$-module.
Then the induction of $U$ and restriction of $V$ via $\alpha$ are $\Ind_\alpha U \cong B \otimes_A U$ and $\Res_\alpha V$ with $a \in A$ acting on $V$ by $\alpha(a)$.

For any $R$-algebra $A$ and any two $A$-modules $U,V$, the notation $U \mid V$ means $U$ is isomorphic to a direct summand of $V$.
\end{thmwot}

The following lemma consider inductions and restrictions for surjections.
Recall that for any $RG$-module $V$, the module $R \otimes_{RG} V$ is the $R$-module of cofixed points of $V$, where $R$ is the right trivial $RG$-module.

\begin{lem}\label{lem-fixed-cofixed}
Assume $G$ is a finite group, $K\unlhd G$ and denote by $\pi\colon\, G \to G/K$ the natural surjection.
Let $V$ be an $RG$-module.
\begin{compactenum}[(1)]
\item
If $V$ is relatively $1$-projective, there is an isomorphism of $R$-modules:
\[ R\otimes_{RG}V \to V^G,\quad 1\otimes v \mapsto \sum_{g\in G}gv. \]
\item
There are isomorphism of $R(G/K)$-modules:
\[ \Ind_\pi V = R(G/K)\otimes_{RG}V\cong V/I(RK)V\cong R\otimes_{RK}V, \]
where $I(RK)$ is the augmentation idea of $RK$ and  $g\in G$ acts on $1\otimes v \in R \otimes_{RK} V$ as $g.(1\otimes v)=1\otimes gv$.
\item
If furthermore $\Res^G_KV$ is relatively $1$-projective, we also have
\[ \Ind_\pi V \cong R\otimes_{RK}V\cong V^K \]
as $R(G/K)$-modules.
\item
The functor $\Res_\pi-$ is just the functor of inflation $\Inf_{G/K}^G$, \emph{i.e.} identifying the $R(G/K)$-modules to those $RG$-modules with $K$ acting trivially.
\end{compactenum}
\end{lem}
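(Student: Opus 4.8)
The plan is to prove the four statements in the order (1), (2), (3), (4): part (3) will be deduced by combining (1) and (2), and part (4) is immediate from the definitions, so the only genuine work lies in (1) and in the ring‑theoretic identification underlying (2).

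For (1), I would first observe that the assignment $\eta_V\colon 1\otimes v\mapsto\sum_{g\in G}gv$ is a well-defined $R$-homomorphism $R\otimes_{RG}V\to V^G$ which is natural in $V$: well-definedness holds because $R$ is the trivial right $RG$-module, so $1\otimes hv=1\otimes v$ in the source while $\sum_{g}g(hv)=\sum_{g}gv$ after reindexing, and the same reindexing shows the image is $G$-fixed. Next I would check that $\eta_V$ is an isomorphism when $V=RG\otimes_R W$ for an $R$-module $W$: using associativity of tensor products one identifies $R\otimes_{RG}(RG\otimes_R W)$ with $W$, and the $G$-fixed points of $RG\otimes_R W$ are exactly the elements $\sum_{g}g\otimes w$, so $(RG\otimes_R W)^G$ is also identified with $W$; under these identifications $\eta$ becomes the identity. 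Finally, $V$ relatively $1$-projective means $V\mid RG\otimes_R W$ for a suitable $W$ (one may take $W=V$ as an $R$-module); since $\eta$ is a natural transformation of additive functors, $\eta_V$ is a retract of the isomorphism $\eta_{RG\otimes_R W}$ and hence an isomorphism itself.

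For (2), the key point is that, because $K\unlhd G$, the augmentation ideal $I(RK)$ of $RK$ generates a two-sided ideal $I(RK)\,RG$ of $RG$, and the natural map $RG\to R(G/K)$ induces an $R$-algebra isomorphism $RG/I(RK)\,RG\xrightarrow{\sim}R(G/K)$ compatible with the right $RG$-module structures defined via $\pi$. Consequently $R(G/K)\otimes_{RG}V\cong(RG/I(RK)RG)\otimes_{RG}V\cong V/I(RK)V$, the last module being an $R(G/K)$-module because $g(k-1)v=(gkg^{-1}-1)(gv)\in I(RK)V$, so the $G$-action on $V$ descends and kills $K$. The same computation with $K$ in place of $G$ (and the trivial quotient) gives $R\otimes_{RK}V\cong V/I(RK)V$, and one checks directly that the $G$-action $g.(1\otimes v)=1\otimes gv$ on $R\otimes_{RK}V$ is well defined and corresponds to the descended action on $V/I(RK)V$; chasing the isomorphisms shows they are the stated ones, namely $\pi(g)\otimes v\mapsto gv+I(RK)V\mapsto 1\otimes gv$.

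For (3), combine (2) with part (1) applied to the finite group $K$ and the relatively $1$-projective $RK$-module $\Res^G_K V$: this yields $\Ind_\pi V\cong R\otimes_{RK}V\cong(\Res^G_K V)^K=V^K$ as $R$-modules, the second isomorphism being $1\otimes v\mapsto\sum_{k\in K}kv$. It then remains to see this is an isomorphism of $R(G/K)$-modules, which again uses $K\unlhd G$: $V^K$ carries a $G/K$-action (if $kv=v$ for all $k\in K$ then $k(gv)=g(g^{-1}kg)v=gv$), and the short reindexing $\sum_{k}k(gv)=g\sum_{k}(g^{-1}kg)v=g\sum_{k}kv$ shows the composite map is $G/K$-equivariant. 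Part (4) is then immediate: for an $R(G/K)$-module $V_1$, by definition $g\in G$ acts on $\Res_\pi V_1$ via $\pi(g)$, so $K$ acts trivially and $\Res_\pi V_1$ is precisely $V_1$ regarded as an $RG$-module with $K$ acting trivially, which is the definition of $\Inf^G_{G/K}V_1$. The one step demanding an actual argument rather than bookkeeping is the reduction in (1) to the module $RG\otimes_R W$ via naturality and additivity; the recurring technical subtlety throughout (2) and (3) is the systematic use of $K\unlhd G$ to guarantee that $I(RK)RG$ is two-sided and that the relevant $G$-actions descend to $G/K$-actions.
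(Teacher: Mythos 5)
Your proposal is correct and follows essentially the same route as the paper: reduce part (1) to the free case $RG\otimes_R W$ and pass to the direct summand, identify $\Ind_\pi V$ with $V/I(RK)V\cong R\otimes_{RK}V$ via $R(G/K)\cong RG/I(RK)RG$ for (2), combine these for (3), and read (4) off the definitions. The only cosmetic differences are that you verify $(RG\otimes_R W)^G\cong W$ directly where the paper cites \cite[Proposition 2.5.7]{Linck18a}, and you spell out the naturality/retract step that the paper leaves implicit.
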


\begin{proof}
(1) As V is relatively $1$-projective, there is an $R$-module $V_0$ such that $V \mid RG\otimes_R V_0$.
Set $U=RG\otimes_R V_0$, then $U \cong \Ind_1^GV_0$.
For $U$, we have isomorphisms of $R$-modules
\[ R\otimes_{RG}U = R\otimes_{RG}RG\otimes_RV_0 \cong V_0 \cong \left(\Ind_1^GV_0\right)^G \cong U^G, \]
where the isomorphism $V_0 \cong \left(\Ind_1^GV_0\right)^G$ is a special case of \cite[Proposition 2.5.7]{Linck18a}.
Tracking each step of the above, it can be seen that the isomorphism $R\otimes_{RG}U \cong U^G$ can be given by $1\otimes u \mapsto \sum_{g\in G}gu$.
Since $V$ is a direct summand of $U$ as $RG$-modules, the assertion in (1) follows.

(2) Follows from
\begin{align*}
& R(G/K)\otimes_{RG}V \cong RG/I(RK)RG \otimes_{RG}V \cong V/I(RK)RGV \\
=\ & V/I(RK)V \cong RK/I(RK) \cong R\otimes_{RK}V
\end{align*}
and the definition of the action of $G$ on $R\otimes_{RK}V$.

(3) It can be checked directly that the isomorphism of $R$-modules $R\otimes_{RK}V\cong V^K$ from (1) is compatible with the action of $G/K$.

(4) Follows immediately from the definition.
\end{proof}

We collect technical parts in the proof of Mackey formula in the following lemmas, which may be of independent interesting.

\begin{lem}\label{lem-Mackey-1}
Assume $\alpha\colon\, G \twoheadrightarrow H$ is a surjective group homomorphism, let $G_0$ be a subgroup of $G$ containing $K:=\Ker\alpha$ and denote by $H_0=\alpha(G_0)$ and by
\[ \alpha_0\colon\ G_0 \twoheadrightarrow H_0,\ g \mapsto \alpha(g) \]
the restriction of $\alpha$ to $G_0$ and $H_0$ as shown in the following diagram
\[ \begin{tikzcd}
G\arrow[r,"\alpha",two heads] & H \\
G_0\arrow[u,hook]\arrow[r,"\alpha_0",two heads] & H_0\arrow[u,hook] \\
K\arrow[u,hook]
\end{tikzcd} \]
\begin{compactenum}[(1)]
\item
For any $RG$-module $V$, we have $\Res^H_{H_0}\Ind_\alpha V \cong \Ind_{\alpha_0}\Res^G_{G_0} V$.
\item
For any $RH_0$-module $U$, we have $\Res_\alpha\Ind_{H_0}^H U \cong \Ind_{G_0}^G\Res_{\alpha_0} U$.
\end{compactenum}
\end{lem}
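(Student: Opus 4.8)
The plan is to reduce both parts to a pair of isomorphisms of bimodules for the group algebra $RH$ and then invoke associativity of the tensor product. The key elementary observation is that, since $K=\Ker\alpha\subseteq G_0$, the homomorphism $\alpha$ induces bijections on coset spaces $G/G_0\to H/H_0$ and $G_0\backslash G\to H_0\backslash H$; in particular $[G:G_0]=[H:H_0]$, and a set of representatives $\{g_i\}$ for the $G_0$-cosets in $G$ maps under $\alpha$ to a set of representatives $\{\alpha(g_i)\}$ for the $H_0$-cosets in $H$.

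Granting this, I would establish two bimodule isomorphisms. First, the assignment $x\otimes g\mapsto x\alpha(g)$ defines an isomorphism of $(RH_0,RG)$-bimodules $RH_0\otimes_{RG_0}RG\to RH$, where $RG_0$ acts on $RH_0$ on the right through $\alpha_0$ and on $RG$ on the left by multiplication; it is visibly well defined and left $RH_0$- and right $RG$-linear, it is surjective because $\alpha$ is onto, and it is injective because both sides are free left $RH_0$-modules with bases $\{1\otimes g_i\}$ and $\{\alpha(g_i)\}$ respectively, which correspond under the map. Symmetrically, $g\otimes x\mapsto\alpha(g)x$ defines an isomorphism of $(RG,RH_0)$-bimodules $RG\otimes_{RG_0}RH_0\to RH$, where $RG$ acts on $RH$ on the left through $\alpha$ and $RG_0$ acts on $RH_0$ on the left through $\alpha_0$; here one compares free right $RH_0$-modules of rank $[G:G_0]=[H:H_0]$.

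Part (1) then follows: $\Res^H_{H_0}\Ind_\alpha V$ is $RH\otimes_{RG}V$ viewed as a left $RH_0$-module, so by the first bimodule isomorphism and associativity
\[ \Res^H_{H_0}\Ind_\alpha V\cong(RH_0\otimes_{RG_0}RG)\otimes_{RG}V\cong RH_0\otimes_{RG_0}(RG\otimes_{RG}V)\cong RH_0\otimes_{RG_0}\Res^G_{G_0}V=\Ind_{\alpha_0}\Res^G_{G_0}V, \]
using $RG\otimes_{RG}V\cong V$ as left $RG$-modules, which becomes $\Res^G_{G_0}V$ after restricting the outer action to $RG_0$. Part (2) is the mirror image: $\Res_\alpha\Ind_{H_0}^H U$ is $RH\otimes_{RH_0}U$ with $RG$ acting through $\alpha$, i.e. $(\lsub{\alpha}RH)\otimes_{RH_0}U$, so by the second bimodule isomorphism and associativity
\[ \Res_\alpha\Ind_{H_0}^H U\cong(RG\otimes_{RG_0}RH_0)\otimes_{RH_0}U\cong RG\otimes_{RG_0}(RH_0\otimes_{RH_0}U)\cong RG\otimes_{RG_0}\Res_{\alpha_0}U=\Ind_{G_0}^G\Res_{\alpha_0}U, \]
since $RH_0\otimes_{RH_0}U\cong U$ as left $RH_0$-modules becomes $\Res_{\alpha_0}U$ after restricting along $\alpha_0$.

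The only genuine work is the verification of the two bimodule isomorphisms, and the sole place where the hypothesis $K\subseteq G_0$ is used is in forcing the relevant free-module ranks (equivalently, coset counts) to match, which gives injectivity; everything else is careful bookkeeping of left versus right actions. As an alternative route, after identifying $H$ with $G/K$ (and $H_0$ with $G_0/K$, $\alpha$ with the natural projection) one can use Lemma \ref{lem-fixed-cofixed}(2) and (4): part (1) then reduces to the identity $\Res^{G/K}_{G_0/K}(R\otimes_{RK}V)\cong R\otimes_{RK}\Res^G_{G_0}V$, immediate from the explicit action formula there, and part (2) to the commutation of inflation with induction, $\Inf_{G/K}^G\Ind_{G_0/K}^{G/K}U\cong\Ind_{G_0}^G\Inf_{G_0/K}^{G_0}U$; I expect the bimodule argument above to be the more uniform and economical one.
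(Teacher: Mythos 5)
Your argument is correct, and it takes a genuinely different route from the paper. You prove both parts at once from two bimodule isomorphisms, $RH_0\otimes_{RG_0}RG\cong RH$ as $(RH_0,RG)$-bimodules and $RG\otimes_{RG_0}RH_0\cong RH$ as $(RG,RH_0)$-bimodules, each coming from the fact that $K\subseteq G_0$ makes $\alpha$ induce bijections $G_0\backslash G\to H_0\backslash H$ and $G/G_0\to H/H_0$; associativity of the tensor product then yields (1) and (2) immediately. The paper instead proves (1) by identifying both sides with the cofixed-point module $R\otimes_{RK}V$ via Lemma \ref{lem-fixed-cofixed}(2) (using $\Ker\alpha_0=K$), and proves (2) by writing both $\Ind_{H_0}^H U$ and $\Ind_{G_0}^G\Res_{\alpha_0}U$ as sums over $[G/G_0]$ and checking directly that the obvious $R$-isomorphism is $G$-equivariant --- so its part (2) is essentially your coset comparison carried out on the modules rather than at the bimodule level, while its part (1) is shorter because Lemma \ref{lem-fixed-cofixed} is already available (and your closing alternative sketch is exactly that strategy). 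Your version buys uniformity, an evident naturality of the isomorphisms, and a transparent localization of where $K\subseteq G_0$ enters; one small presentational caution: for the second bimodule map, justify injectivity as in the first case by observing that the map carries the basis $\set{g_i\otimes 1}$ to the basis $\set{\alpha(g_i)}$ of $RH$ as a free right $RH_0$-module, rather than merely by equality of ranks, since surjectivity between free modules of equal finite rank does not by itself give injectivity over an arbitrary commutative coefficient ring $R$.
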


\begin{proof}
(1) It follows from part (2) of Lemma \ref{lem-fixed-cofixed} and the assumption $K=\Ker\alpha\leq G_0$ that there are isomorphisms of $RH_0$-modules
\[ \Res^H_{H_0}\Ind_\alpha V \cong R \otimes_{RK} V \cong \Ind_{\alpha_0}\Res^G_{G_0} V, \]
where only the action of $H_0$ is considered for the middle term $R\otimes_{RK}V$.

(2) By the definition of classical inductions and the assumption that $K=\Ker\alpha\leq G_0$, we have $R$-isomorphisms:
\[ \Ind_{H_0}^H U \cong \sum_{t\in[G/G_0]} \alpha(t) \otimes U \]
and
\[ \Ind_{G_0}^G \Res_{\alpha_0} U \cong \sum_{t\in[G/G_0]} t \otimes \lsub{\alpha_0}U. \]
Thus $\Res_\alpha\Ind_{H_0}^H U$ and $\Ind_{G_0}^G\Res_{\alpha_0} U$ are isomorphic as $R$-modules in the obvious way, and it can be checked directly that this $R$-isomorphism is compatible with the action of $G$.
\end{proof}

\begin{lem}\label{lem-Mackey-2}
Assume there are surjections of groups $\alpha,\beta,\gamma$ such that $\alpha=\beta\gamma$ as in the following commutative diagram
\[ \begin{tikzcd}
& A\arrow[rd,"\beta",two heads] &\\
B\arrow[rr,"\alpha",two heads]\arrow[ru,"\gamma",two heads] && C.
\end{tikzcd} \]
\begin{compactenum}[(1)]
\item
For any $RA$-module $V$, we have that $\Ind_\beta V \cong \Ind_\alpha\Res_\gamma V$.
\item
Assume $U$ is an $RB$-module such that $\Ker\alpha = \Ker\gamma \times K_0$ with $K_0$ acting trivially on $U$, then $\Ind_\gamma U \cong \Res_\beta\Ind_\alpha U$.
\end{compactenum}
\end{lem}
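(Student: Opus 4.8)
The plan is to treat the two parts separately, in each case passing to a concrete model of the relevant induced module.

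For part (1), I would first unwind the definitions: $\Ind_\alpha\Res_\gamma V = RC\otimes_{RB}\lsub{\gamma}V$, where $RC$ is a right $RB$-module via $\alpha=\beta\gamma$ and $\lsub{\gamma}V$ is $V$ viewed as a left $RB$-module via $\gamma$. I would then invoke the elementary fact that for a surjective ring homomorphism $S\twoheadrightarrow T$, a right $T$-module $M$ and a left $T$-module $N$, the canonical map $M\otimes_S N\to M\otimes_T N$ is an isomorphism (once $S\twoheadrightarrow T$, the defining relations of the two tensor products coincide). Applying this with $S=RB$, $T=RA$ (the surjection being the one induced by $\gamma$), $M=RC$ and $N=V$, one obtains $RC\otimes_{RB}\lsub{\gamma}V\cong RC\otimes_{RA}V=\Ind_\beta V$, and this isomorphism is plainly compatible with the left $RC$-action on the first tensor factor.

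For part (2), I would use Lemma \ref{lem-fixed-cofixed}(2) as the main tool. Put $M=\Ker\gamma$ and $L=\Ker\alpha$; since $\alpha=\beta\gamma$ we have $M\le L$, and under the identifications $A\cong B/M$, $C\cong B/L$ the homomorphism $\beta$ becomes the natural projection $B/M\to B/L$. Lemma \ref{lem-fixed-cofixed}(2) applied to $\gamma$ and to $\alpha$ then gives isomorphisms $\Ind_\gamma U\cong U/I(RM)U$ of $RA$-modules and $\Ind_\alpha U\cong U/I(RL)U$ of $RC$-modules, where in both quotients an element $g\in B$ acts on the class $\bar u$ of $u$ by $g\cdot\bar u=\overline{gu}$ (this action factoring through $A$, resp.\ through $C$). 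Since $\beta$ is the natural projection, restricting the second isomorphism along $\beta$ yields $\Res_\beta\Ind_\alpha U\cong U/I(RL)U$ as $RA$-modules, still with the action $g\cdot\bar u=\overline{gu}$. Hence everything reduces to the identity $I(RM)U=I(RL)U$: once this holds, $U/I(RM)U$ and $U/I(RL)U$ are literally the same $RA$-module, and $\Ind_\gamma U\cong\Res_\beta\Ind_\alpha U$ follows.

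It remains to prove $I(RM)U=I(RL)U$, and this is the only step with real content, using both hypotheses. The inclusion $I(RM)U\subseteq I(RL)U$ is clear from $M\le L$. For the reverse inclusion I would write a general $l\in L=M\times K_0$ as $l=mk$ with $m\in M$, $k\in K_0$, and compute, for $u\in U$ and using that $K_0$ acts trivially on $U$,
\[ (l-1)u=(mk-1)u=m(ku)-u=mu-u=(m-1)u\in I(RM)U. \]
As the elements $(l-1)u$ with $l\in L$, $u\in U$ span $I(RL)U$ over $R$, this gives $I(RL)U\subseteq I(RM)U$, hence equality. The main obstacle, such as it is, lies entirely in recognizing that part (2) collapses to this augmentation-ideal computation; the rest is bookkeeping with Lemma \ref{lem-fixed-cofixed} and with the behaviour of tensor products under surjections of rings.
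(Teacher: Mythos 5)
Your proof is correct, but part (1) takes a genuinely different route from the paper. The paper proves (1) by applying Lemma \ref{lem-fixed-cofixed}\,(2) twice, identifying both $\Ind_\beta V$ and $\Ind_\alpha\Res_\gamma V$ with cofixed-point-type modules $R\otimes_{RL}V$ and $R\otimes_{RK}\lsub{\gamma}V$ (with $L=\Ker\beta$, $K=\Ker\alpha$), and then comparing these via the isomorphism $K/\Ker\gamma\cong L$ induced by $\gamma$ together with the triviality of the $\Ker\gamma$-action on $\lsub{\gamma}V$. You instead use the base-change fact that for a surjective ring homomorphism $S\twoheadrightarrow T$ the canonical map $M\otimes_S N\to M\otimes_T N$ is an isomorphism, applied to $R\gamma\colon RB\twoheadrightarrow RA$; this bypasses Lemma \ref{lem-fixed-cofixed} entirely, is arguably cleaner, and in fact only uses surjectivity of $\gamma$, so it proves (1) in slightly greater generality than stated. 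For part (2) your argument is essentially the paper's: both reduce via Lemma \ref{lem-fixed-cofixed}\,(2) to comparing $R\otimes_{R\Ker\gamma}U$ with $R\otimes_{R\Ker\alpha}U$ (equivalently $U/I(R\Ker\gamma)U$ with $U/I(R\Ker\alpha)U$) and note that the $A$- and $C$-actions are both induced from the $B$-action on $U$; the paper simply asserts the $R$-isomorphism from the hypothesis $\Ker\alpha=\Ker\gamma\times K_0$ with $K_0$ acting trivially, whereas you make it explicit through the computation $(mk-1)u=(m-1)u$, i.e.\ the equality $I(R\Ker\gamma)U=I(R\Ker\alpha)U$, which is a welcome added detail rather than a deviation.
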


\begin{proof}
Denote by $K=\Ker\alpha$, $K_1=\Ker\gamma$ and $L=\Ker\beta$.

(1) It follows from part (2) of Lemma \ref{lem-fixed-cofixed} that there are isomorphisms of $RC$-modules
\[ \Ind_\beta V \cong R \otimes_{RL} V \]
and
\[ \Ind_\alpha\Res_\gamma V \cong R \otimes_{RK} \lsub{\gamma}V. \]
Note that $K/\Ker\gamma \cong L$ via $\gamma$ and $\Ker\gamma$ acts trivially on $\lsub{\gamma}V$, there is an isomorphism of $RC$-modules
\[ R \otimes_{RL} V \cong R \otimes_{RK} \lsub{\gamma}V. \]
Combining the above three isomorphisms gives the assertion.

(2) It follows from part (2) of Lemma \ref{lem-fixed-cofixed} that
\[ \Ind_\gamma U \cong R \otimes_{RK_1} U \]
and
\[ \Ind_\alpha U \cong R \otimes_{RK} U. \]
By the assumption on $U$ and $K$, we have an $R$-isomorphism
\[ R \otimes_{RK_1} U \cong R \otimes_{RK} U. \]
Since the action of $A$ on $R \otimes_{RK_1} U$ and the action of $C$ on $R \otimes_{RK} U$ are both induced by the action of $B$ on $U$, the above $R$-isomorphism is also compatible with the action of $A$, thus the assertion follows.
\end{proof}

\begin{lem}\label{lem-Mackey-3}
Assume there is a commutative diagram of group homomorphisms
\[ \begin{tikzcd}
H\arrow[r,"\alpha",two heads] & L\\
B\arrow[u,"\gamma"]\arrow[r,"\epsilon",two heads] & C\arrow[u,"i",hook]
\end{tikzcd} \]
with $\alpha$ and $\epsilon$ surjective and $i$ injective satisfying $\Ker\alpha \subseteq \Img\gamma$.
\begin{compactenum}[(1)]
\item
For any $RH$-module $V$, we have that $\Res_i\Ind_\alpha V \cong \Ind_\epsilon\Res_\gamma V$.
\item
Assume $U$ is an $RB$-module such that $\Ker\epsilon = \Ker\gamma \times K_0$ with $K_0$ acting trivially on $U$, then $\Ind_\gamma U \cong \Res_\alpha\Ind_i\Ind_\epsilon U$.
\end{compactenum}
\end{lem}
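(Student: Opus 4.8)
The plan is to deduce both parts formally from Lemmas~\ref{lem-Mackey-1} and~\ref{lem-Mackey-2} by factoring $\gamma$ through its image. Set $G_0:=\Img\gamma$, write $\gamma=\iota\circ\gamma_0$ with $\gamma_0\colon B\twoheadrightarrow G_0$ the corestriction of $\gamma$ and $\iota\colon G_0\hookrightarrow H$ the inclusion, and put $H_0:=\alpha(G_0)$. Since the square commutes and $\epsilon$ is surjective,
\[ H_0=\alpha(\Img\gamma)=\Img(\alpha\circ\gamma)=\Img(i\circ\epsilon)=i(C), \]
so $i$ maps $C$ isomorphically onto $H_0$, and the hypothesis $\Ker\alpha\subseteq\Img\gamma$ says exactly that $\Ker\alpha\leq G_0\leq H$. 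Consequently $\alpha\colon H\twoheadrightarrow L$, the subgroup $G_0$ of $H$ containing $\Ker\alpha$, and $H_0=\alpha(G_0)$ satisfy the hypotheses of Lemma~\ref{lem-Mackey-1} (with $(G,H)$ there replaced by $(H,L)$); let $\alpha_0\colon G_0\twoheadrightarrow H_0$ denote the restriction of $\alpha$. Commutativity also gives $\alpha_0\circ\gamma_0=i\circ\epsilon$, so, identifying $H_0$ with $C$ along $i$, we get a factorization $\epsilon=\alpha_0\circ\gamma_0$ into surjections $\gamma_0\colon B\twoheadrightarrow G_0$ and $\alpha_0\colon G_0\twoheadrightarrow C$, which is precisely the setting of Lemma~\ref{lem-Mackey-2}.

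For part~(1), transitivity of non-injective restriction gives $\Res_\gamma V=\Res_{\gamma_0}\bigl(\Res^H_{G_0}V\bigr)$, so Lemma~\ref{lem-Mackey-2}(1) applied to $\epsilon=\alpha_0\gamma_0$ with the $RG_0$-module $W:=\Res^H_{G_0}V$ gives
\[ \Ind_\epsilon\Res_\gamma V=\Ind_\epsilon\Res_{\gamma_0}W\cong\Ind_{\alpha_0}W=\Ind_{\alpha_0}\Res^H_{G_0}V \]
(as $RC$-modules, under the identification $C\cong H_0$). On the other hand, $\Res_i\Ind_\alpha V$ is just $\Res^L_{H_0}\Ind_\alpha V$ transported along $i$, and Lemma~\ref{lem-Mackey-1}(1) identifies $\Res^L_{H_0}\Ind_\alpha V\cong\Ind_{\alpha_0}\Res^H_{G_0}V$ as $RH_0$-modules. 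Composing the two gives $\Res_i\Ind_\alpha V\cong\Ind_\epsilon\Res_\gamma V$.

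For part~(2), transitivity of induction gives $\Ind_\gamma U=\Ind^H_{G_0}\Ind_{\gamma_0}U$. Because $\Ker\gamma_0=\Ker\gamma$, the assumption that $\Ker\epsilon=\Ker\gamma\times K_0$ with $K_0$ acting trivially on $U$ is exactly the hypothesis of Lemma~\ref{lem-Mackey-2}(2) for $\epsilon=\alpha_0\gamma_0$, which yields $\Ind_{\gamma_0}U\cong\Res_{\alpha_0}\Ind_\epsilon U$ as $RG_0$-modules. Hence $\Ind_\gamma U\cong\Ind^H_{G_0}\Res_{\alpha_0}\Ind_\epsilon U$, and feeding the $RH_0$-module $\Ind_\epsilon U$ (transported along $i$) into Lemma~\ref{lem-Mackey-1}(2) rewrites the right-hand side as $\Res_\alpha\Ind^L_{H_0}\Ind_\epsilon U=\Res_\alpha\Ind_i\Ind_\epsilon U$, which is the claimed isomorphism.

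I expect no real obstacle here: the whole argument is the bookkeeping that the reduction above carries out, and the points to watch are (i) keeping the identification $C\cong H_0=i(C)$ coherent throughout, (ii) confirming that $\gamma_0$, $\alpha_0$ and $\epsilon$ are all surjective so that Lemmas~\ref{lem-Mackey-1} and~\ref{lem-Mackey-2} genuinely apply, and (iii) noting $\Ker\gamma_0=\Ker\gamma$ so that the module hypothesis in~(2) transfers verbatim. The transitivities of non-injective induction and restriction used above are immediate, the first from associativity of the tensor product and the second from unwinding the definition of $\Res$.
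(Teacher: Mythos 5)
Your proposal is correct and takes essentially the same route as the paper's proof: both factor $\gamma$ through $G_0=\Img\gamma$ as $\gamma=\iota\gamma_0$, use the hypothesis $\Ker\alpha\subseteq\Img\gamma$ to place $G_0$ in the setting of Lemma \ref{lem-Mackey-1}, identify $C$ with $i(C)=\alpha(G_0)$, and apply Lemma \ref{lem-Mackey-2} to the resulting factorization of $\epsilon$ (noting $\Ker\gamma_0=\Ker\gamma$ for part (2)). The only cosmetic difference is that the paper names the induced surjection $\bar{\epsilon}\colon\Img\gamma\twoheadrightarrow C$ rather than your restriction $\alpha_0$, and these coincide under the identification along $i$.
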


\begin{proof}
Decompose $\gamma$ as the canonical composition $\gamma=\iota\gamma_0$, where $\iota$ is the inclusion of $\Img\gamma$ into $H$ and $\gamma_0\colon\, B \twoheadrightarrow \Img\gamma$ the surjection induced by $\gamma$.
Since $i$ is injective, we have that $\Ker\gamma \subseteq \Ker\epsilon$.
Thus $\epsilon$ factors through a surjection $\bar{\epsilon}\colon\, \Img\gamma \twoheadrightarrow C$.
So we have that $\alpha\iota\gamma_0 = \alpha\gamma = i\epsilon = i\bar{\epsilon}\gamma_0$.
Since $\gamma_0$ is surjective, it follows that $\alpha\iota = i\bar{\epsilon}$.
Consequently, all the triangles and squares in the following diagram
\[ \begin{tikzcd}
H\arrow[rr,"\alpha",two heads] && L\\
&\Img\gamma\arrow[lu,"\iota",hook]\arrow[rd,"\bar{\epsilon}",two heads]& \\
B\arrow[uu,"\gamma"]\arrow[rr,"\epsilon",two heads]\arrow[ru,"\gamma_0",two heads] && C\arrow[uu,"i",hook]
\end{tikzcd} \]
commute.

(1) Viewing $C$ as a subgroup of $L$ via $i$, it follows from part (1) of Lemma \ref{lem-Mackey-1} that
\[ \Res_i\Ind_\alpha V \cong \Ind_{\bar{\epsilon}}\Res_\iota V. \]
Now it follows from part (1) of Lemma \ref{lem-Mackey-2} and transitivity of restrictions that
\[ \Ind_{\bar{\epsilon}}\Res_\iota V \cong \Ind_\epsilon\Res_{\gamma_0}\Res_\iota V \cong \Ind_\epsilon\Res_\gamma V. \]

(2) Viewing $C$ as a subgroup of $L$ via $i$, it follows from part (2) of Lemma \ref{lem-Mackey-1} that
\[ \Res_\alpha\Ind_i\Ind_\epsilon U \cong \Ind_\iota\Res_{\bar{\epsilon}}\Ind_\epsilon U. \]
Then it follows from part (2) of Lemma \ref{lem-Mackey-2} (note that $\Ker\gamma_0=\Ker\gamma$) and transitivity of inductions that
\[ \Ind_\iota\Res_{\bar{\epsilon}}\Ind_\epsilon U \cong \Ind_\iota\Ind_{\gamma_0} U \cong \Ind_\gamma U. \qedhere \]
\end{proof}

%%%%%%%%%%%%%%%%%%%%%%%%%%%%%%%%%%%%%%%%%%%%%%%%%%%%%%%%%
\section{Proof of the main result}\label{proof}

The transitivity holds even for the general induction and restriction for homomorphisms of $R$-algebras and this follows immediately from the definition.
We include this as below just for completeness.

\begin{thm}[tansitivity]\label{thm-transit}
Let $\alpha\colon A \to B$ and $\beta\colon B\to C$ be homomorphisms of $R$-algebras, $U$ be an $A$-module and $W$ be a $C$-module.
Then we have:
\begin{compactenum}[(1)]
  \item $\Res_\alpha(\Res_\beta W)\cong\Res_{\beta\alpha}W$;
  \item $\Ind_\beta(\Ind_\alpha U)\cong\Ind_{\beta\alpha}U$.
\end{compactenum}
\end{thm}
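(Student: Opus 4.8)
The plan is to prove both statements directly from the definitions of $\Ind$ and $\Res$ in terms of tensor products and restriction of scalars, since no finiteness or freeness hypotheses are needed here.

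For part (1), the restriction $\Res_\beta W$ is simply $W$ regarded as a $B$-module via $b \cdot w = \beta(b)w$, and then $\Res_\alpha(\Res_\beta W)$ is that $B$-module regarded as an $A$-module via $a \cdot w = \alpha(a)w$, which unwinds to $a \cdot w = \beta(\alpha(a))w = (\beta\alpha)(a)w$. This is exactly $\Res_{\beta\alpha}W$, so the two are not merely isomorphic but equal as $A$-modules; I would state this and note the identity map is the isomorphism. There is essentially nothing to check beyond the associativity of the algebra action, so this step is immediate.

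For part (2), I would use the standard associativity isomorphism for tensor products: $\Ind_\beta(\Ind_\alpha U) = C \otimes_B (B \otimes_A U)$ and $\Ind_{\beta\alpha}U = C \otimes_A U$, where in the first expression $C$ is a right $B$-module via $\beta$ and $B$ is a right $A$-module via $\alpha$, while in the second $C$ is a right $A$-module via $\beta\alpha$. The canonical isomorphism $C \otimes_B (B \otimes_A U) \cong (C \otimes_B B) \otimes_A U \cong C \otimes_A U$, $c \otimes (b \otimes u) \mapsto cb \otimes u$, is an isomorphism of abelian groups; one checks it respects the left $C$-module structures (which on both sides come from left multiplication of $C$ on its first tensor factor) and hence is an isomorphism of $C$-modules. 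The only genuinely substantive point is verifying that the right $A$-module structure on $C \otimes_B B$ induced through $C \otimes_B B \cong C$ agrees with the structure coming from $\beta\alpha$, which follows because the isomorphism $C \otimes_B B \cong C$ sends $c \otimes b \mapsto cb = c\beta(b)$ and then acting by $a \in A$ multiplies by $\alpha(a)$, giving $c\beta(b)\beta(\alpha(a)) = c\beta(b\alpha(a))$, matching the $A$-action $\beta\alpha$ on $C$.

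I do not anticipate a real obstacle: both parts are formal consequences of how restriction of scalars and extension of scalars compose, and the argument is the same one that proves transitivity of ordinary induction and restriction for subgroup inclusions. The only care required is bookkeeping of which homomorphism induces which module structure, so I would present the chain of canonical isomorphisms and remark that each is readily checked to be compatible with the relevant module actions.
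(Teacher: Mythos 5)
Your proof is correct and matches what the paper intends: the paper states this result without proof, remarking that it "follows immediately from the definition," and your argument (equality of the composed restrictions, plus the associativity isomorphism $C\otimes_B(B\otimes_A U)\cong C\otimes_A U$ with the check of module structures) is precisely the routine verification being alluded to. No gaps; your write-up just makes explicit what the paper leaves implicit.
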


\begin{thm}\label{thm-ind-tensor}
Let $\varphi\colon G \to G_1$ be a group homomorphism, $V$ be an $RG$-module and $V_1$ be an $RG_1$-module.
Then there is an isomorphism of $RG_1$-modules
\[ \Ind_\varphi V \otimes_R V_1 \cong \Ind_\varphi (V\otimes_R\Res_\varphi V_1). \]
\end{thm}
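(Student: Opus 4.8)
The plan is to exhibit a natural $RG_1$-isomorphism by unwinding both sides as tensor products over the group algebras. Writing $\Ind_\varphi V = RG_1 \otimes_{RG} V$, the left-hand side is $(RG_1 \otimes_{RG} V) \otimes_R V_1$, where $G_1$ acts diagonally on the outer tensor factor $RG_1$ (by left multiplication) and on $V_1$. The right-hand side is $RG_1 \otimes_{RG} (V \otimes_R \Res_\varphi V_1)$, where $G$ acts diagonally on $V \otimes_R \Res_\varphi V_1$ via $\varphi$. The first step is to write down the candidate maps explicitly on elementary tensors:
\[ (x \otimes v) \otimes w \;\longmapsto\; x \otimes (v \otimes x^{-1} w), \qquad x \in G_1,\ v \in V,\ w \in V_1, \]
extended $R$-linearly, with inverse $x \otimes (v \otimes w) \mapsto (x \otimes v)\otimes xw$. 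This is precisely the ``untwisting'' map familiar from the projection formula for ordinary induction; the point is that it goes through verbatim for non-injective $\varphi$ because the formula never uses injectivity.

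The key steps, in order, are: (i) check that the first assignment is well defined, i.e. that it respects the relation $xg \otimes v \otimes w = x \otimes gv \otimes w$ in the source coming from $\otimes_{RG}$ — here one uses that on the right-hand side $G$ acts on $V \otimes_R \Res_\varphi V_1$ diagonally through $\varphi$, so $g\cdot(v\otimes x^{-1}w) = gv \otimes \varphi(g)x^{-1}w$, and one must match this against replacing $x$ by $xg$, which turns $x^{-1}$ into $g^{-1}x^{-1}$; tracking the $RG_1$-module structure on $RG_1$ (where $g$ acts as $\varphi(g)$) shows the two expressions agree; (ii) check the reverse assignment is well defined and that the two are mutually inverse, which is a direct computation on elementary tensors; (iii) check $G_1$-equivariance: for $y \in G_1$, the action on the source sends $(x\otimes v)\otimes w$ to $(yx \otimes v)\otimes yw$, and applying the map gives $yx \otimes (v \otimes x^{-1}y^{-1}yw) = yx \otimes (v \otimes x^{-1}w)$, which is exactly $y$ applied to the image; (iv) remark that all maps are $R$-linear and natural, so we may conclude.

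The main obstacle is step (i): one has to be scrupulous about which copy of the group is acting on which factor and via which homomorphism, since $RG_1$ simultaneously carries a left $RG_1$-module structure (for the final answer) and a right $RG$-module structure (via $\varphi$, for the tensor product $\otimes_{RG}$), and $\Res_\varphi V_1$ silently reinterprets the $G_1$-action on $V_1$ as a $G$-action. Once the bookkeeping is set up correctly the verification is mechanical. I would present it by defining the two maps, verifying well-definedness of each via the displayed relation chase, and then noting that mutual invertibility and $G_1$-equivariance are immediate from the formulas; alternatively one can phrase the whole argument functorially as the chain of natural isomorphisms
\[ (RG_1 \otimes_{RG} V)\otimes_R V_1 \cong RG_1 \otimes_{RG}(V \otimes_R \Res_\varphi V_1) \]
obtained by combining associativity of $\otimes$ with the standard $RG_1$-bimodule isomorphism $RG_1 \otimes_R V_1 \cong RG_1 \otimes_R \Res_\varphi V_1$ (as $(RG_1,RG)$-bimodules) that implements the untwisting — but the elementary-tensor computation is more transparent and self-contained, so that is the route I would take.
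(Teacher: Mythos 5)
Your proposal is correct and follows essentially the same route as the paper: both exhibit the explicit mutually inverse ``untwisting'' maps $(g_1\otimes v)\otimes v_1 \mapsto g_1\otimes(v\otimes g_1^{-1}v_1)$ and $g_1\otimes(v\otimes v_1)\mapsto (g_1\otimes v)\otimes g_1v_1$ and verify well-definedness and $RG_1$-linearity directly, exactly as in the classical projection formula. Your relation chase in step (i) and the equivariance check in step (iii) are the same verifications the paper leaves to the reader.
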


\begin{proof}
This can be proved in the exactly the same way as for the classical induction and restriction by explicitly giving the mutually inverse isomorphisms
\[ (g_1\otimes v)\otimes v_1 \mapsto g_1\otimes(v\otimes g_1^{-1}v_1) \]
and
\[ g_1\otimes(v\otimes v_1) \mapsto (g_1\otimes v)\otimes g_1v_1. \]
It is readily to check that the above morphisms are well-defined homomorphisms of $RG_1$-modules and are mutually inverses.
\end{proof}

\begin{thm}
Let $\varphi\colon G \to G_1$ be a group homomorphism with $K=\Ker\varphi$, $V$ be an $RG$-module and $V_1$ be an $RG_1$-module.
\begin{compactenum}[(1)]
\item
$(\Res_\varphi V_1)^*\cong\Res_\varphi V_1^*$.
\item
If furthermore $\Res^G_KV$ is relatively $1$-projective, then
\[ (\Ind_\varphi V)^*\cong\Ind_\varphi V^*. \]
\end{compactenum}
\end{thm}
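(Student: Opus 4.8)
Part~(1) should be immediate from the definitions: $(\Res_\varphi V_1)^*$ and $\Res_\varphi(V_1^*)$ are both $\Hom_R(V_1,R)$ as $R$-modules, and in each of them an element $g\in G$ acts on a functional $f$ by $v\mapsto f(\varphi(g)^{-1}v)$, so the identity map of $\Hom_R(V_1,R)$ is the required isomorphism of $RG$-modules.

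For part~(2) the plan is to reduce to the case of a surjection. I would factor $\varphi=\iota\pi$, with $\pi\colon G\twoheadrightarrow\bar G:=G/K$ the quotient map and $\iota\colon\bar G\hookrightarrow G_1$ the inclusion of $\Img\varphi$, so that $\Ind_\varphi V\cong\Ind_\iota\Ind_\pi V$ by transitivity (Theorem~\ref{thm-transit}). Since $\Ind_\iota$ is ordinary induction along the finite-index subgroup $\Img\varphi\leq G_1$, the classical formula for the dual of an induced module gives $(\Ind_\iota W)^*\cong\Ind_\iota(W^*)$ for every $R\bar G$-module $W$. Hence it suffices to prove
\[ (\Ind_\pi V)^*\cong\Ind_\pi(V^*)\quad\text{as }R\bar G\text{-modules}, \]
because applying $\Ind_\iota$ to this isomorphism and using transitivity once more yields $(\Ind_\varphi V)^*\cong\Ind_\iota\big((\Ind_\pi V)^*\big)\cong\Ind_\iota\Ind_\pi(V^*)\cong\Ind_\varphi(V^*)$.

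To prove the reduced statement I would identify both sides with $(V^*)^K$ carrying its natural $\bar G$-action. For the left-hand side, Lemma~\ref{lem-fixed-cofixed}(2) gives $\Ind_\pi V\cong V/I(RK)V$, and dualising the exact sequence $0\to I(RK)V\to V\to V/I(RK)V\to 0$ identifies $(\Ind_\pi V)^*$ with the set of $f\in V^*$ that vanish on $I(RK)V$, i.e.\ with $(V^*)^K$; one then checks that the $\bar G$-actions agree. For the right-hand side I first need that $\Res^G_K(V^*)=(\Res^G_KV)^*$ is again relatively $1$-projective; granting this, Lemma~\ref{lem-fixed-cofixed}(2) and (3) applied to $V^*$ over $RK$ give $\Ind_\pi(V^*)\cong R\otimes_{RK}V^*\cong(V^*)^K$ as $R\bar G$-modules. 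Comparing the two computations then finishes the reduced statement, and with it part~(2).

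The one genuinely non-formal point is the stability of relative $1$-projectivity under $R$-duality: if $\Res^G_KV$ is a direct summand, as an $RK$-module, of $RK\otimes_RW$ for some $R$-module $W$, then $(\Res^G_KV)^*$ is a direct summand of $(RK\otimes_RW)^*$, and since $RK$ is $R$-free of finite rank and is a symmetric $R$-algebra one has $(RK\otimes_RW)^*\cong RK\otimes_RW^*$ as $RK$-modules, which is relatively $1$-projective; this step is also the only place where the hypothesis on $V$ is used. I expect the main nuisance --- rather than a real obstacle --- to be the bookkeeping of the $\bar G$-module structures, in particular verifying that the trace isomorphism $R\otimes_{RK}V^*\xrightarrow{\ \sim\ }(V^*)^K$ of Lemma~\ref{lem-fixed-cofixed}(1) and the evaluation identification $(\Ind_\pi V)^*\cong(V^*)^K$ are both $\bar G$-equivariant; but this is essentially already recorded in the proof of Lemma~\ref{lem-fixed-cofixed}, so once the reduction to a surjection is carried out the remaining argument is largely formal.
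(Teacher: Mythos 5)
Your proposal is correct and takes essentially the same route as the paper: part (1) is dispatched from the definitions, and for part (2) the paper likewise reduces to the surjective case via transitivity and the classical dual-of-induction formula, identifies $(\Ind_\varphi V)^*$ with $(V^*)^K$ (via the adjunction $\Hom_R(R\otimes_{RK}V,R)\cong\Hom_{RK}(V,R)$ rather than your dualized exact sequence, which amounts to the same identification), and then applies Lemma \ref{lem-fixed-cofixed}\,(3) to $V^*$ after noting that relative $1$-projectivity of $\Res^G_KV$ passes to the dual. Your explicit justification of that last point (using $(RK\otimes_RW)^*\cong RK\otimes_RW^*$) merely fills in a step the paper cites as the analogous classical fact.
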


\begin{proof}
(1) follows from the definition of dual module and the restriction via $\varphi$.

(2) By the transitivity and the similar assertion for the classical induction, we may ssume that $\varphi$ is surjective.
In particular, $G_1 \cong G/K$.
Then by Lemma \ref{lem-fixed-cofixed}\,(2) and Hom-tensor adjoints, we have $RG_1$-isomorphisms:
\begin{align*}
& (\Ind_\varphi V)^* \cong \Hom_R(R\otimes_{RK}V,R) \\
\cong\ & \Hom_{RK}(V,\Hom_R(R,R)) \cong \Hom_{RK}(V,R) = (V^*)^K.
\end{align*}
Since $\Res^G_K V$ is relatively $1$-projective by assumption, it follows from the similar assertion for classical inductions that $V^*$ is also relatively $1$-projective.
Thus it follows from Lemma \ref{lem-fixed-cofixed}\,(3) for $V^*$ and the above isomorphisms that there are $RG_1$-isomorphisms:
\[\Ind_\varphi V^*\cong R\otimes_{RK}V^* \cong (V^*)^K \cong (\Ind_\varphi V)^*. \qedhere \]
\end{proof}

\begin{lem}\label{lem-hom-equal-tensor}
Let $\varphi\colon G \to G_1$ be a group homomorphism with $K=\Ker\varphi$ and $V$ be an $RG$-module such that $\Res^G_K V$ is relatively $1$-projective, then there is an isomorphism of $RG_1$-modules
\[ \Hom_{RG}(RG_1,V) \cong \Ind_\varphi V. \]
\end{lem}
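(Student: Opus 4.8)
The plan is to use the canonical factorization $\varphi=\iota\pi$, where $\pi\colon G\twoheadrightarrow G/K$ is the natural surjection and $\iota\colon G/K\hookrightarrow G_1$ is the injection identifying $G/K$ with $\Img\varphi$, to establish the statement first for $\pi$, and then deduce the general case by combining it with the classical subgroup case for $\iota$, the Hom--tensor adjunction, and transitivity of induction (Theorem~\ref{thm-transit}). Throughout, the point to watch is not the underlying $R$-module isomorphisms but their compatibility with the $RG_1$- (resp.\ $R(G/K)$-) module structures.

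Step 1 (the surjective case). The claim here is that $\Hom_{RG}(R(G/K),V)\cong\Ind_\pi V$ as $R(G/K)$-modules. For this, a direct verification shows that $f\mapsto f(1K)$ defines an isomorphism of $R(G/K)$-modules
\[ \Hom_{RG}(R(G/K),V)\ \xrightarrow{\ \sim\ }\ V^K, \]
where $\bar g\in G/K$ acts on $w\in V^K$ by $\bar g\cdot w=gw$; this action is well defined because $w$ is $K$-fixed, and $RG$-linearity of $f$ together with the fact that $R(G/K)$ is generated over $R$ by the cosets shows the map is bijective, with inverse $w\mapsto(\bar g\mapsto gw)$. On the other hand, since $\Res^G_KV$ is relatively $1$-projective, Lemma~\ref{lem-fixed-cofixed}(3) gives an isomorphism $\Ind_\pi V\cong V^K$ of $R(G/K)$-modules, with the same action $\bar g\cdot w=gw$. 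Combining the two proves the claim; this is the only place where the hypothesis on $\Res^G_KV$ enters.

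Step 2 (reduction of the general case). The classical fact that induction and coinduction along a finite-index subgroup coincide gives, for any $R(G/K)$-module $W$, an isomorphism $\Hom_{R(G/K)}(RG_1,W)\cong RG_1\otimes_{R(G/K)}W\cong\Ind_\iota W$ of $RG_1$-modules. Moreover the left $RG$-module structure of $RG_1$ factors through $\pi$, so $RG_1\cong R(G/K)\otimes_{R(G/K)}RG_1$ as $(RG,RG_1)$-bimodules (with $R(G/K)$ an $(RG,R(G/K))$-bimodule via $\pi$ on the left), and the Hom--tensor adjunction yields
\[ \Hom_{RG}(RG_1,V)\cong\Hom_{R(G/K)}\bigl(RG_1,\Hom_{RG}(R(G/K),V)\bigr). \]
Applying Step 1 to the inner Hom, then the displayed subgroup isomorphism with $W=\Ind_\pi V$, and finally transitivity of induction, we get
\[ \Hom_{RG}(RG_1,V)\cong\Hom_{R(G/K)}(RG_1,\Ind_\pi V)\cong\Ind_\iota\Ind_\pi V\cong\Ind_\varphi V, \]
which is the assertion.

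The main obstacle is entirely one of keeping track of module structures. In Step 1 one must check that the concrete map $f\mapsto f(1K)$ is $R(G/K)$-linear and that the action it induces on $V^K$ coincides with the one appearing in Lemma~\ref{lem-fixed-cofixed}(3); the $R(G/K)$-structure here does \emph{not} come for free from the tensor--hom adjunction $\Hom_{RG}(\Ind_K^G R,V)\cong\Hom_{RK}(R,\Res^G_K V)$, since $R(G/K)$ does not act on $\Ind_K^G R$ through the inert factor $R$, which is why the direct verification is needed. In Step 2 one must check that the bimodule identification $RG_1\cong R(G/K)\otimes_{R(G/K)}RG_1$ and the adjunction isomorphism are $RG_1$-linear, and that the classical subgroup isomorphism can be chosen $RG_1$-linear. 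Once these routine compatibilities are verified, the result is a formal consequence of Lemma~\ref{lem-fixed-cofixed}, Theorem~\ref{thm-transit}, and the classical subgroup case.
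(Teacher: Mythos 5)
Your proposal is correct and follows essentially the same route as the paper: the surjective case is handled via the isomorphism $\Hom_{RG}(R(G/K),V)\cong V^K\cong\Ind_\pi V$ using Lemma~\ref{lem-fixed-cofixed}, and the general case is obtained from the factorization of $\varphi$ through its image together with the classical coincidence of induction and coinduction for subgroups, the Hom--tensor adjunction, and transitivity. The only difference is that you run the chain of isomorphisms starting from $\Hom_{RG}(RG_1,V)$ rather than from $\Ind_\varphi V$, which is immaterial.
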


\begin{proof}
Assume first that $\varphi$ is surjective, then $G_1 \cong G/K$.
Thus we have an isomorphism of $RG_1$-modules:
\[ \Hom_{RG}(RG_1,V) \to V^K,\quad \alpha \mapsto \alpha(1_{G_1}). \]
On the other hand, by Lemma \ref{lem-fixed-cofixed}, we have isomorphisms of $RG_1$-modules:
\[ \Ind_\varphi V \cong R(G/K) \otimes_{RG} V \cong R \otimes_{RK} V \cong V^K; \]
here the assumption that $\Res^G_K V$ is relatively $1$-projective is used.
Combining the above isomorphisms, the assertion when $\varphi$ is surjective follows.

Then we consider the general situation.
Denote by $\varphi_0\colon G \to \varphi(G)$ the homomorphism induced by $\varphi$.
Then using the transitivity, the similar assertion for the classical induction (see for example \cite[Corollary 4.3.8 (5)]{Web1}), the case in the previous paragraph and the Hom-Tensor adjoint, we have that
\begin{align*}
& \Ind_\varphi V \cong \Ind_{\varphi(G)}^{G_1}\Ind_{\varphi_0} V \cong \Hom_{R\varphi(G)}(RG_1,\Ind_{\varphi_0}V) \\
\cong\ & \Hom_{R\varphi(G)}\left( RG_1, \Hom_{RG}(R\varphi(G),V) \right) \cong \Hom_{RG}( R\varphi(G)\otimes_{R\varphi(G)}RG_1, V ) \\
\cong\ & \Hom_{RG}(RG_1,V).
\end{align*}
\end{proof}

\begin{thm}[Frobenius reciprocity]
Let $\varphi\colon G \to G_1$ be a group homomorphism with $K=\Ker\varphi$, $V$ be an $RG$-module and $V_1$ be an $RG_1$-module.
\begin{compactenum}[(1)]
\item
$ \Hom_{RG_1}(\Ind_\varphi V,V_1) \cong \Hom_{RG}(V,\Res_\varphi V_1) $;
\item
If furthermore $\Res^G_KV$ is relatively $1$-projective, then\[\Hom_{RG_1}(V_1,\Ind_\varphi V)\cong\Hom_{RG}(\Res_\varphi V_1,V).\]
\end{compactenum}
\end{thm}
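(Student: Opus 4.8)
The plan is to prove the two parts of Frobenius reciprocity by reducing each to a standard adjunction together with the results already established. For part (1), the statement is the Hom-tensor adjunction in disguise: since $\Ind_\varphi V = RG_1\otimes_{RG}V$ with $RG_1$ viewed as an $(RG_1,RG)$-bimodule via $\varphi$, the tensor-hom adjunction gives
\[ \Hom_{RG_1}(RG_1\otimes_{RG}V,V_1)\cong\Hom_{RG}(V,\Hom_{RG_1}(RG_1,V_1)). \]
It then remains to identify $\Hom_{RG_1}(RG_1,V_1)$, with $RG_1$ the $(RG_1,RG)$-bimodule, as $\Res_\varphi V_1$ as $RG$-modules; but this is exactly one of the isomorphisms recorded in the notation paragraph (\S\ref{notation}), namely $\Res_\varphi V_1\cong\Hom_{RG_1}(RG_1,V_1)$. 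So part (1) requires no extra hypothesis and follows purely formally. I would spell out that the adjunction isomorphism is natural and respects the $R$-module (indeed, here trivial group) actions, so no compatibility subtlety arises.

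For part (2), the idea is to use Lemma \ref{lem-hom-equal-tensor}, which under the hypothesis that $\Res^G_K V$ is relatively $1$-projective gives $\Ind_\varphi V\cong\Hom_{RG}(RG_1,V)$ as $RG_1$-modules, where $RG_1$ is the $(RG,RG_1)$-bimodule via $\varphi$. Substituting this into the left-hand side and applying the hom-hom (co-induction) adjunction
\[ \Hom_{RG_1}(V_1,\Hom_{RG}(RG_1,V))\cong\Hom_{RG}(RG_1\otimes_{RG_1}V_1,V), \]
I would then identify $RG_1\otimes_{RG_1}V_1$, with $RG_1$ the $(RG,RG_1)$-bimodule, as $\Res_\varphi V_1$ as $RG$-modules — again this is one of the isomorphisms stated explicitly in \S\ref{notation}. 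Chaining the three isomorphisms yields $\Hom_{RG_1}(V_1,\Ind_\varphi V)\cong\Hom_{RG}(\Res_\varphi V_1,V)$, as desired.

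The main obstacle, such as it is, is purely bookkeeping: one must be careful about which bimodule structure on $RG_1$ is used at each stage (the $(RG_1,RG)$-structure for the tensor-hom adjunction in (1) versus the $(RG,RG_1)$-structure for the hom-hom adjunction in (2)), and verify that the standard adjunctions genuinely produce isomorphisms of $RG$- or $RG_1$-modules rather than merely of $R$-modules — here the residual group action in each Hom/tensor comes from the remaining bimodule side, and the adjunction isomorphisms are natural in that variable, so everything is automatic. I expect the proof to be short: part (1) is the tensor-hom adjunction plus the identification $\Res_\varphi V_1\cong\Hom_{RG_1}(RG_1,V_1)$ from \S\ref{notation}; part (2) is Lemma \ref{lem-hom-equal-tensor}, then the co-induction adjunction, then the identification $\Res_\varphi V_1\cong RG_1\otimes_{RG_1}V_1$ from \S\ref{notation}. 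An alternative route for (2) would be to pass to the surjective case via transitivity (as in the proof of Lemma \ref{lem-hom-equal-tensor}) and invoke Lemma \ref{lem-fixed-cofixed}(3) to replace $\Ind_\varphi V$ by $V^K$, but invoking Lemma \ref{lem-hom-equal-tensor} directly is cleaner.
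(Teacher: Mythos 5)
Your proof is correct and follows essentially the same route as the paper: part (1) is the tensor--hom adjunction together with the identification $\Hom_{RG_1}(RG_1,V_1)\cong\Res_\varphi V_1$, and part (2) is Lemma \ref{lem-hom-equal-tensor} followed by the adjunction $\Hom_{RG_1}(V_1,\Hom_{RG}(RG_1,V))\cong\Hom_{RG}(RG_1\otimes_{RG_1}V_1,V)$ and the identification $RG_1\otimes_{RG_1}V_1\cong\Res_\varphi V_1$. Your careful tracking of the two bimodule structures on $RG_1$ is exactly the right bookkeeping (and is in fact slightly more precise than the paper's own display, which misprints the Hom in part (2)).
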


\begin{proof}
(1) Follows from the classical Hom-Tensor adjoint:
\begin{align*}
& \Hom_{RG_1}(\Ind_\varphi V,V_1) = \Hom_{RG_1}(RG_1\otimes_{RG}V,V_1) \\
\cong\ & \Hom_{RG}(V,\Hom_{RG_1}(RG_1,V_1)) \cong \Hom_{RG}(V,\Res_\varphi V_1).
\end{align*}
(2) Follows from the classical Hom-Tensor adjoint and Lemma \ref{lem-hom-equal-tensor}:
\begin{align*}
& \Hom_{RG_1}(V_1,\Ind_\varphi V) \cong \Hom_{RG_1}(V_1,\Hom_{RG_1}(RG_1,V) \\
\cong\ & \Hom_{RG}(RG_1\otimes_{RG_1}V_1,V) \cong \Hom_{RG}(\Res_\varphi V_1,V).
\end{align*}
\end{proof}

\begin{thm}[Mackey's formula]\label{mackey}
Let $\alpha\colon\, K \to G$, $\beta\colon\, H \to G$ be two group homomorphism.
For any $x\in G$, set
\[ \lsup{x}\alpha\colon\ K \to G,\ k \mapsto x\alpha(k)x^{-1}, \]
and consider the pull back of $\lsup{x}\alpha$ and $\beta$ (see for example \cite[Example 2.1.16(2)]{Agore}):
\[\begin{tikzcd}
K\arrow[r,"\lsup{x}\alpha"]&G\\
B_x\arrow[u,"\gamma_x"]\arrow[r,"\delta_x"]&H\arrow[u,"\beta"]
\end{tikzcd}\]
Then we have for any $K$-module $V$ that
\[ \Res_\beta\Ind_\alpha V\cong\bigoplus_{x\in[\beta(H)\backslash G/\alpha(K)]}\Ind_{\delta_x}\Res_{\gamma_x}V. \]
\end{thm}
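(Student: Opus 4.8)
The plan is to reduce the general Mackey formula to the special cases already established, by factoring each homomorphism into a surjection followed by an injection. Write $\alpha = \iota_A \alpha_0$ and $\beta = \iota_B \beta_0$ where $\alpha_0\colon K \twoheadrightarrow \alpha(K)$, $\beta_0\colon H \twoheadrightarrow \beta(H)$ are the induced surjections and $\iota_A, \iota_B$ are the inclusions into $G$. By transitivity (Theorem \ref{thm-transit}), $\Ind_\alpha V \cong \Ind_{\iota_A}\Ind_{\alpha_0} V$ and $\Res_\beta \cong \Res_{\beta_0}\Res_{\iota_B}$, so
\[ \Res_\beta\Ind_\alpha V \cong \Res_{\beta_0}\Res_{\iota_B}\Ind_{\iota_A}\Ind_{\alpha_0} V. \]
The middle piece $\Res_{\iota_B}\Ind_{\iota_A}$ is a restriction-of-induction for genuine subgroup inclusions $\alpha(K)\hookrightarrow G \hookleftarrow \beta(H)$, so the \emph{classical} Mackey formula applies: it decomposes as a direct sum over $x\in[\beta(H)\backslash G/\alpha(K)]$ of terms $\Ind_{\beta(H)\cap\, {}^x\!\alpha(K)}^{\beta(H)}\, {}^x(\Res^{\alpha(K)}_{\beta(H)^x \cap\, \alpha(K)}(-))$, applied to $\Ind_{\alpha_0}V$.

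Next I would identify, for each double-coset representative $x$, the subgroup $\beta(H)\cap {}^x\alpha(K)$ and the conjugation map with the pullback data. The pullback $B_x$ of ${}^x\alpha$ and $\beta$ comes with $\gamma_x\colon B_x\to K$ and $\delta_x\colon B_x\to H$; concretely $B_x = \{(k,h) : x\alpha(k)x^{-1} = \beta(h)\}$, and $\delta_x(B_x) = \beta^{-1}(\beta(H)\cap {}^x\alpha(K))$ while $\gamma_x(B_x) = \alpha^{-1}({}^{x^{-1}}(\beta(H))\cap\alpha(K))$. The key bookkeeping step is to check that, after composing with the surjections $\alpha_0$ and $\beta_0$, each summand of the classical Mackey decomposition applied to $\Ind_{\alpha_0}V$ becomes $\Ind_{\delta_x}\Res_{\gamma_x} V$ — up to reindexing by the fact that the double cosets $\beta(H)\backslash G/\alpha(K)$ already appear in the statement. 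For this I would invoke Lemma \ref{lem-Mackey-1}, Lemma \ref{lem-Mackey-2}, and especially Lemma \ref{lem-Mackey-3}, which are precisely designed to move inductions and restrictions past one another through commuting squares with mixed surjective/injective maps and a kernel-splitting hypothesis. Lemma \ref{lem-Mackey-3}(1), with the roles $H\rightsquigarrow K$ (so $\alpha\rightsquigarrow\alpha_0$), $L\rightsquigarrow \alpha(K)$, and the pullback supplying the bottom row, converts $\Res_{(\text{subgroup})}\Ind_{\alpha_0}V$ into $\Ind_{\delta_x\text{-part}}\Res_{\gamma_x\text{-part}}V$; the conjugation by $x$ is absorbed by replacing $\alpha$ with ${}^x\alpha$ throughout, which changes nothing about $V$ as a $K$-module since ${}^x\alpha$ and $\alpha$ have the same kernel.

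The remaining point is to verify the hypothesis of Lemma \ref{lem-Mackey-3} is met at each application, namely that the relevant kernel splits as $\Ker(\text{bottom surjection}) = \Ker(\text{vertical map}) \times K_0$ with $K_0$ acting trivially on the module in play. Here the module $\Ind_{\alpha_0} V$ genuinely has $\alpha(K)$ acting, hence has $\Ker\alpha_0 = \Ker\alpha$ acting trivially on its underlying $RK$-structure only after restriction along $\gamma_x$; one checks that $\Ker\gamma_x$ contains $\{1\}\times\Ker\beta$-type factors that act trivially precisely because $\Ind_{\alpha_0}V$ is inflated from $\alpha(K)$. I expect this verification — matching the pullback's kernel structure to the product decomposition required by Lemmas \ref{lem-Mackey-2} and \ref{lem-Mackey-3}, uniformly in $x$ — to be the main obstacle; it is purely group-theoretic but fiddly, and getting the indexing set to come out as $[\beta(H)\backslash G/\alpha(K)]$ rather than some finer or coarser partition requires care about which maps are applied before taking double cosets. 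Once these identifications are in place, assembling the direct sum is formal: the classical Mackey decomposition is a direct sum, $\Ind$ and $\Res$ are additive, and each summand has been rewritten in the desired form, so the proof concludes by collecting terms.
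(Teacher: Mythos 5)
Your overall route coincides with the paper's: factor $\alpha$ and $\beta$ through their images, use transitivity plus the classical Mackey formula for the genuine inclusions $\alpha(K)\hookrightarrow G\hookleftarrow\beta(H)$, and then rewrite each double-coset summand via Lemmas \ref{lem-Mackey-1}--\ref{lem-Mackey-3}. But the rewriting of the summands is exactly the non-formal content of the theorem, and your proposal leaves it unexecuted (you yourself flag it as ``the main obstacle''), while the one concrete indication you give of how the hypotheses would be verified is off target. The first missing ingredient: Lemma \ref{lem-Mackey-3} cannot be applied to $\gamma_x,\delta_x$ as they stand, since its bottom row must be a surjection onto a subgroup of the top-right group, whereas $\gamma_x$ and $\delta_x$ are in general neither injective nor surjective. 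What is needed is the surjection $\epsilon_x\colon B_x\twoheadrightarrow \beta(H)\cap\lsup{x}\alpha(K)$, $(k,h)\mapsto\beta(h)=\lsup{x}\alpha(k)$, furnished by the pullback property; it sits in two commuting squares, one with vertical maps $\gamma_x$ and $i_x\colon \beta(H)\cap\lsup{x}\alpha(K)\hookrightarrow\lsup{x}\alpha(K)$ and top row $\lsup{x}\alpha_0\colon K\twoheadrightarrow\lsup{x}\alpha(K)$, the other with vertical maps $\delta_x$ and $j_x\colon \beta(H)\cap\lsup{x}\alpha(K)\hookrightarrow\beta(H)$ and top row $\beta_0$. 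Lemma \ref{lem-Mackey-3}(1) applied to the first square (hypothesis $\Ker\lsup{x}\alpha_0\subseteq\Img\gamma_x$, which holds because $\alpha(k)=1$ forces $(k,1)\in B_x$) gives $\Res_{i_x}\Ind_{\lsup{x}\alpha_0}V\cong\Ind_{\epsilon_x}\Res_{\gamma_x}V$; only after that does Lemma \ref{lem-Mackey-3}(2), applied to the second square with module $U=\Res_{\gamma_x}V$, collapse $\Res_{\beta_0}\Ind_{j_x}\Ind_{\epsilon_x}\Res_{\gamma_x}V$ into $\Ind_{\delta_x}\Res_{\gamma_x}V$.

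Second, the hypothesis check you sketch is aimed at the wrong module and gives the wrong reason. No ``inflation'' property of $\Ind_{\alpha_0}V$ and no triviality of a $\Ker\alpha_0$-action enter anywhere: Lemma \ref{lem-Mackey-3}(1) has a purely group-theoretic hypothesis, and the module condition in Lemma \ref{lem-Mackey-3}(2) concerns $U=\Res_{\gamma_x}V$. For that one computes $\Ker\gamma_x=\set{(1,h)\mid \beta(h)=1}$ and $\Ker\delta_x=\set{(k,1)\mid \lsup{x}\alpha(k)=1}$, hence $\Ker\epsilon_x=\Ker\delta_x\times\Ker\gamma_x$, and $\Ker\gamma_x$ acts trivially on $\Res_{\gamma_x}V$ simply by the definition of restriction along $\gamma_x$ --- not because of any property of $\Ind_{\alpha_0}V$. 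These verifications are short, but together with the introduction of $\epsilon_x$ they are the substance of the proof; as written, your argument is a plan that matches the paper's strategy rather than a completed proof.
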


\begin{proof}

Denote $\alpha_0\colon K \to \alpha(K)$ and $\beta_0\colon H \to \beta(H)$ the induced map by $\alpha$ and $\beta$ respectively.
Consider the following diagram:
\[\begin{tikzcd}
K\arrow[rrr,"\lsup{x}\alpha"]\arrow[rd,"\lsup{x}\alpha_0",two heads]& & &G\\
&\lsup{x}\alpha(K)\arrow[rru,"a",hook]\\
&\beta(H)\bigcap \lsup{x}\alpha(K)\arrow[u,"i_x",hook]\arrow[r,"j_x",hook]&\beta(H)\arrow[ruu,"b",hook]\\
B_x\arrow[uuu,"\gamma_x"]\arrow[ru,"\epsilon_x",two heads]\arrow[rrr,"\delta_x"]&&&H\arrow[lu,"\beta_0",two heads]\arrow[uuu,"\beta"]
\end{tikzcd}
\]
where $\epsilon_x$ exists because $(i_x,j_x)$ is the pull back of $(a,b)$.
Precisely,
\[ B_x = \set{(k,h)\in K\times H \mid \lsup{x}\alpha(k)=\beta(h)}, \]
\[ \epsilon_x\colon\ B_x \to \beta(H)\cap\lsup{x}\alpha(K),\ (k,h) \mapsto \beta(h)=\lsup{x}\alpha(k). \]
Thus all the triangles and squares in the above diagram commute.

Using the transitivity and the classical Mackey formula, we have
\begin{equation}\label{equ-1}
\begin{aligned}
& \Res_\beta\Ind_\alpha V \cong \Res_{\beta_0}\Res^G_{\beta(H)}\Ind^G_{\alpha(K)}\Ind_{\alpha_0} V\\
\cong\ & \bigoplus_{x\in[\beta(H)\backslash G/\alpha(K)]}\Res_{\beta_0}\Ind^{\beta(H)}_{\beta(H)\bigcap \lsup{x}\alpha(K)}\Res^{\lsup{x}\alpha(K)}_{\beta(H)\bigcap \lsup{x}\alpha(K)} {\lsup{x}\Ind_{\alpha_0}} V \\
\cong\ & \bigoplus_{x\in[\beta(H)\backslash G/\alpha(K)]}\Res_{\beta_0}\Ind_{j_x}\Res_{i_x}\Ind_{\lsup{x}\alpha_0}V 
\end{aligned}
\end{equation}

It can be checked easily that $\Ker\lsup{x}\alpha_0 \leq \Img\gamma_x$, then by applying part (1) of Lemma \ref{lem-Mackey-3} to the following diagram
\[ \begin{tikzcd}
K\arrow[r,"\lsup{x}\alpha_0",two heads] & \lsup{x}\alpha(K)\\
B_x\arrow[u,"\gamma_x"]\arrow[r,"\epsilon_x",two heads] & \beta(H) \cap \lsup{x}\alpha(K)\arrow[u,"i_x",hook]
\end{tikzcd} \]
we have that
\begin{equation}\label{equ-2}
\Res_{i_x}\Ind_{\lsup{x}\alpha_0} V \cong \Ind_{\epsilon_x}\Res_{\gamma_x} V.
\end{equation}

It can be checked similarly that $\Ker\beta_0 \leq \Img\delta_x$.
It can be calculated directly that
\begin{align*}
\Ker\gamma_x &= \set{ (1,h) \mid \beta(h)=1 }, \\
\Ker\delta_x &= \set{ (k,1)\mid \lsup{x}\alpha(k)=1 }, \\
\Ker\epsilon_x &= \set{ (k,h) \mid \beta(h) = \lsup{x}\alpha(k) = 1 }.
\end{align*}
Thus, $\Ker\epsilon_x=\Ker\delta_x\times\Ker\gamma_x$.
Also, by definition, $\Ker\gamma_x$ acts trivially on $\Res_{\gamma_x} V$.
So by applying part (2) of Lemma \ref{lem-Mackey-3} to the following diagram
\[ \begin{tikzcd}
H\arrow[r,"\beta_0",two heads] & \beta(H)\\
B_x\arrow[u,"\delta_x"]\arrow[r,"\epsilon_x",two heads] & \beta(H) \cap \lsup{x}\alpha(K)\arrow[u,"j_x",hook]
\end{tikzcd} \]
we have that
\begin{equation}\label{equ-3}
\Res_{\beta_0}\Ind_{j_x}\Ind_{\epsilon_x}\Res_{\gamma_x} V \cong \Ind_{\delta_x}\Res_{\gamma_x} V.
\end{equation}

Finally, the assertion follows by combining (\ref{equ-1}), (\ref{equ-2}) and (\ref{equ-3}).
\end{proof}

%%%%%%%%%%%%%%%%%%%%%%%%%%%%%%%%%%%%%%%%%%%%%%%%%%%%%%%%%

\end{document}